\newcommand{\pref}[1]{(\ref{#1})}
\newcommand{\be}{\begin{equation}}
\newcommand{\ee}{\end{equation}}
\newcommand{\qed}{{\unskip\nobreak\hfil\penalty50\quad\null\nobreak\hfil
	$\square$\parfillskip0pt\finalhyphendemerits0\par\medskip}}
\newcommand{\vecb}{\mbox{\boldmath $ b $}}
\newcommand{\vecc}{\mbox{\boldmath $ c $}}
\newcommand{\vecf}{\mbox{\boldmath $ f $}}
\newcommand{\vecx}{\mbox{\boldmath $ x $}}
\newcommand{\veckappa}{\mbox{\boldmath $ \kappa $}}
\newcommand{\vecnu}{\mbox{\boldmath $ \nu $}}
\newtheorem{thm}{Theorem}[section]
\newtheorem{proof}{\normalfont\itshape Proof.}
\title{An application of interpolation inequalities between the deviation of curvature 
and the isoperimetric ratio to the length-preserving flow}
\author{Kohei Nakamura\\
Saitama University, Japan}
\date{\today}
\begin{document}
\maketitle
\begin{abstract}
In recent work of Nagasawa and the author, new interpolation inequalities 
between the deviation of curvature and the isoperimetric ratio were proved.
In this paper,
we apply such estimates to investigate the large-time behavior of the length-preserving flow of closed plane curves 
without a convexity assumption.
\\
2010 Mathematics Subject Classification:
53A04,
53C44,
35B40,
35K55\\[0.3cm]
{\it Keywords}: length-preserving flow, isoperimetric ratio, curvature, interpolation inequalities
\end{abstract}
\section{Introduction}
\par
A number of papers have been devoted to the study of curvature flows with non-local term. 
These are called {\it non-local curvature flows}. 
Gage \cite{G} and Jiang-Pan \cite{JP} studied such flows
\begin{align}
	&\partial_t \vecf =  \veckappa - \frac{1}{L}\left(\int_0^L \veckappa \cdot \vecnu \, ds\right)\vecnu,\label{area-preserving}\\
        &\partial_t \vecf =  \veckappa - \frac{L}{2A} \vecnu,\label{JiangPan}	
\end{align}
respectively. Here $ \vecf = ( f_1 , f_2 ) \, : \, \mathbb{R} / L \mathbb{Z} \to \mathbb{R}^2 $ is a function 
such that $ \mathrm{Im} \vecf $ is a closed plane curve with rotation number $ 1 $ 
and $s$ is the arc-length parameter, $ \vecnu = ( - f_2^\prime , f_1^\prime ) $ is the inward unit normal vector,
$ \veckappa = \vecf^{ \prime \prime } $ is the curvature vector.
The (signed) area $ A $ is given by
\[
	A = - \frac 12 \int_0^L \vecf \cdot \vecnu \, ds .
\]
In \cite{G} and \cite{JP}, it was proved that a simple closed strictly convex initial curve remains so along the flow, and
the evolving curve converges to a circle in each non-local curvature flow.
However there are very few results for non-local curvature flows
when initial curve is not convex. 
Hence we would like to know the behavior
of evolving curves not assuming convexity. To do this, we consider as follows.
\par
The curvature $ \kappa = \veckappa \cdot \vecnu $ is positive when $ \mathrm{Im} \vecf $ is convex.
Since the curve has rotation number $ 1 $, the deviation of curvature is
\[
	\tilde \kappa
	=
	\kappa - \frac 1L \int_0^L \kappa \, ds
	=
	\kappa - \frac { 2 \pi } L .
\]
For a non-negative integer $ \ell $,
we set
\[
	I_\ell = L^{ 2 \ell + 1 } \int_0^L | \tilde \kappa^{( \ell )} |^2 ds ,
\]
which is a scale invariant quantity (cf.\
\cite{DKS}).
It is important to estimate $I_\ell$ for the global analysis of evolving curves.
We have the Gagliardo-Nirenberg inequalities 
\[
  I_\ell\leq CI_m^{\frac{\ell}{m}}I_0^{1-\frac{\ell}{m}},
\]
where $0\leqq \ell\leqq m$ and $C$ is constant and independent of $L$.
Such inequalities are very useful but only these are not sufficient to estimate $I_0$ because these inequalities use $I_0$.
Hence we need a different type of inequality to estimate $I_\ell$ for $\ell \geqq 0$.
\par
The curve along the flow {\rm \pref{area-preserving}} or {\rm \pref{JiangPan}} is expected to converge
to a circle when the initial curve is close to a circle (in some sense) even if it is not convex.
If it is true, the isoperimetric ratio $ \displaystyle{ \frac { 4 \pi A } { L^2 } } $ 
converges to $1$ as $t \to \infty$. Taking this into consideration, 
we introduce the quantity
\[
	I_{-1} = 1 - \frac { 4 \pi A } { L^2 }
	,
\]
which is also scale invariant,
and is non-negative by the isoperimetric inequality.  
\par
Several inequalities for $I_0, I_{-1}$ were derived by Nagasawa and the author in \cite{NN}.
Using these inequalities, $I_\ell$ can be interpolated by $I_{-1}$ and $I_m$ for $\ell \in \{0,1,\ldots,m\}$. 
The authors applied the inequalities to the flow {\rm \pref{area-preserving}} and {\rm \pref{JiangPan}}, 
and they showed that, assuming global existence, solutions of each flow become convex in finite time and converge exponentially to a circle 
even if the initial curve is not strictly convex.
\par
The purpose of this paper is to consider the large-time behavior of the length-preserving flow
\be
        \partial_t \vecf = \veckappa - \left(\frac{1}{2\pi} \int_0^L \|\veckappa\|^2 ds\right) \vecnu.
	\label{length-preserving}
\ee
This was firstly studied by Ma-Zhu \cite{MZ}, who proved that a simple closed strictly convex initial curve remains so along the flow, and
the evolving curve converges to a circle.
Their method is not applicable without the convexity. 
Hence we investigate the large-time behavior
of evolving curves not assuming the convexity.    
\par 
In section 2, we introduce the several inequalities which was proved in \cite{NN}. Furthermore
we present the results for the flow {\rm \pref{area-preserving}} and {\rm \pref{JiangPan}} 
given in \cite{NN}. 
\par
In section 3, we investigate the length-preserving flow {\rm \pref{length-preserving}}.
\section{Known results}
\setcounter{equation}{0}
\par
In this section we introduce some results which were established in \cite{NN}, for details of the proofs, see \cite{NN}.
\subsection{Several inequalities}
Clearly, we have that 
$ \tilde \kappa \equiv 0 $ implies $ \mathrm{Im} \vecf $ is a round circle,
which attains the minimum $ I_{-1} = 0 $.
This suggests that $ I_{-1} $ can be dominated by certain quantities involving $ \tilde \kappa $.
Indeed,
we have
\begin{align*}
	I_{-1}
	= & \
	\frac { L^2 - 4 \pi A } { L^2 }
	=
	\frac 1 { L^2 } \int_0^L \left( - L \vecf \cdot \veckappa + 2 \pi \vecf \cdot \vecnu \right) ds
	\\
	= & \
	- \frac 1L \int_0^L \tilde \kappa ( \vecf \cdot \vecnu ) ds
	\\
	= & \
	- \frac 1L \int_0^L \tilde \kappa \left( \vecf \cdot \vecnu - \frac 1L \int_0^L \vecf \cdot \vecnu \, ds \right) ds
\end{align*}
and
\[
	\left| \vecf \cdot \vecnu - \frac 1L \int_0^L \vecf \cdot \vecnu \, ds \right|
	\leqq L .
\]
Thus it holds that
\[
	0 \leqq
	I_{-1}
	\leqq
	I_0^{ \frac 12 }
	.
\]
However,
since $ \displaystyle{ \vecf \cdot \vecnu - \frac 1L \int_0^L \vecf \cdot \vecnu \, ds = 0 } $ when $ \tilde \kappa \equiv 0 $,
it seems that the above inequality can be improved.
\begin{thm}
We have
\[
	8\pi^2 I_{-1} \leqq
	\frac { I_0 } { 8 \pi^2 } 
        \leqq I_{-1}^{ \frac 12 }
	\left[ L^3 \int_0^L
	\left\{ \kappa^3 \tilde \kappa + ( \tilde \kappa^\prime )^2 \right\} ds
	\right]^{ \frac 12 }
	.
\]
Two equalities only hold in the trivial case $ \tilde \kappa \equiv 0 $.
\label{Theorem1}
\end{thm}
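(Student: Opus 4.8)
The plan is to treat the two inequalities separately after reducing both, by scale invariance, to comparisons among three scalars attached to $\tilde\kappa$: the $L^2$-energy $\int_0^L\tilde\kappa^2\,ds=I_0/L$; the pairing $\int_0^L\tilde\kappa\,(-\psi)\,ds=LI_{-1}$, where $\psi=\vecf\cdot\vecnu-\frac1L\int_0^L\vecf\cdot\vecnu\,ds$, supplied by the identity already displayed above; and the bracketed quantity, which I would first rewrite by the exact integration by parts
\[
	\int_0^L\bigl\{\kappa^3\tilde\kappa+(\tilde\kappa^\prime)^2\bigr\}\,ds
	=\int_0^L\tilde\kappa\,(\kappa^3-\kappa^{\prime\prime})\,ds ,
\]
valid since $\tilde\kappa^{\prime\prime}=\kappa^{\prime\prime}$ and the curve is closed. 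Clearing the powers of $L$, the right-hand inequality becomes $\bigl(\int_0^L\tilde\kappa^2\bigr)^2\leqq C\bigl(\int_0^L\tilde\kappa(-\psi)\bigr)\bigl(\int_0^L\tilde\kappa(\kappa^3-\kappa^{\prime\prime})\bigr)$ with $C=64\pi^4$, while the left-hand inequality becomes the bound $\int_0^L\tilde\kappa(-\psi)\leqq\frac{L^2}{8\pi^2}\int_0^L\tilde\kappa^2$, a Poincar\'e-type control of $I_{-1}$ by $I_0$.

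For the right-hand inequality the engine is a Cauchy--Schwarz/interpolation argument with $\int_0^L\tilde\kappa^2$ sitting as a geometric mean between the two outer pairings. At the linearised level one has $-\psi=\mathcal{S}\tilde\kappa$ and $\kappa^3-\kappa^{\prime\prime}=\mathcal{T}\tilde\kappa$ for commuting, self-adjoint, positive operators with $\mathcal{S}\mathcal{T}\geqq\mathrm{Id}$ on the mean-zero class (the first harmonic being removable by translation); then $\int\tilde\kappa^2=\int(\mathcal{S}^{1/2}\tilde\kappa)(\mathcal{S}^{-1/2}\tilde\kappa)\leqq(\int\tilde\kappa\mathcal{S}\tilde\kappa)^{1/2}(\int\tilde\kappa\mathcal{S}^{-1}\tilde\kappa)^{1/2}\leqq(\int\tilde\kappa(-\psi))^{1/2}(\int\tilde\kappa(\kappa^3-\kappa^{\prime\prime}))^{1/2}$, the last step using $\mathcal{S}^{-1}\leqq\mathcal{T}$, i.e.\ $\mathcal{S}\mathcal{T}\geqq\mathrm{Id}$; this is exactly the product bound (indeed with $C=1$, so there is slack against $C=64\pi^4$). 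I would make this rigorous not through operators but through the Frenet relations $(\vecf\cdot\vecnu)^{\prime}=-\kappa(\vecf\cdot\vect)$ and $(\vecf\cdot\vect)^{\prime}=1+\kappa(\vecf\cdot\vecnu)$, which express $\psi^{\prime},\psi^{\prime\prime}$ through $\kappa$ and thereby link the three integrals by repeated integration by parts, using the normalisations $\int_0^L\kappa\,ds=2\pi$ and $\int_0^L\kappa\,(\vecf\cdot\vecnu)\,ds=-L$.

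For the left-hand inequality I would return to $LI_{-1}=\int_0^L\tilde\kappa(-\psi)\,ds$ and apply Cauchy--Schwarz, $LI_{-1}\leqq\|\tilde\kappa\|_{L^2}\|\psi\|_{L^2}$, replacing the crude pointwise estimate $|\psi|\leqq L$ recalled just before Theorem~\ref{Theorem1} by a genuine $L^2$ (Wirtinger) bound $\|\psi\|_{L^2}\leqq cL^2\|\tilde\kappa\|_{L^2}$. Because the identity for $I_{-1}$ is invariant under translations $\vecf\mapsto\vecf+\vecc$ (since $\int_0^L\tilde\kappa\,\vecnu\,ds=0$), I may first translate the curve so as to annihilate the first harmonic of $\vecf\cdot\vecnu$; the oscillation of $\psi$ then lives in harmonics of order at least two, and this spectral gap produces the constant, upgrading the earlier $I_{-1}\leqq I_0^{1/2}$ to a bound linear in $I_0$.

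The main obstacle, shared by both halves, is that there is no exact \emph{linear} operator relating $\psi$ to $\tilde\kappa$: the correspondence is genuinely nonlinear, since $\theta(s)=\tfrac{2\pi}{L}s+\int_0^s\tilde\kappa\,d\sigma$ enters $\vecnu$ through $\cos\theta,\sin\theta$. The operator picture above is therefore only a linearised heuristic, and the real task is to produce the corresponding \emph{exact} integral identities from the Frenet system and to control the nonlinear remainders; this is what costs the sharp spectral constants and leaves the factor $8\pi^2$. Finally, equality in either Cauchy--Schwarz step forces the two paired functions to be proportional; feeding this back into the Frenet relations collapses $\psi$ and $\tilde\kappa$ to constants, whence $\tilde\kappa\equiv0$ and the curve is a round circle, which is the asserted rigidity.
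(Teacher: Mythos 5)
You have written a plan, not a proof: the step you yourself flag as ``the real task''---replacing the linearised picture $-\psi=\mathcal{S}\tilde\kappa$, $\kappa^3-\kappa''=\mathcal{T}\tilde\kappa$, $\mathcal{S}\mathcal{T}\geqq\mathrm{Id}$ by exact identities and controlling the nonlinear remainders---is precisely where all of the content of the theorem lives, and nothing in the proposal indicates how to carry it out. The Frenet relations do not close up the way you need: differentiating $\psi$ produces $\psi'=-\kappa(\vecf\cdot\vect)$ and then $(\vecf\cdot\vect)'=1+\kappa(\vecf\cdot\vecnu)$, so each integration by parts introduces the new unknown $\vecf\cdot\vect$ rather than returning to $\tilde\kappa$; there is no exact self-adjoint $\mathcal{S}$, hence no $\mathcal{S}^{1/2}$ to insert into Cauchy--Schwarz, and the ``spectral gap after translating away the first harmonic'' is only approximate because $\vecnu$ involves $\cos\theta,\sin\theta$ with $\theta(s)=\frac{2\pi s}{L}+\int_0^s\tilde\kappa\,d\sigma$, so $\vecc\cdot\vecnu$ is not a first harmonic in $s$. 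Since the whole point of the theorem is the explicit constant $8\pi^2$ (the crude pointwise bound already gives $I_{-1}\leqq I_0^{1/2}$), an argument that concedes the constant to an unestimated nonlinear remainder establishes nothing. Note also a constant mismatch: your reduction of the left-hand inequality to $\int_0^L\tilde\kappa(-\psi)\,ds\leqq\frac{L^2}{8\pi^2}\int_0^L\tilde\kappa^2\,ds$ is the statement $I_{-1}\leqq I_0/(8\pi^2)$, whereas the displayed theorem asserts $8\pi^2I_{-1}\leqq I_0/(8\pi^2)$, which would require $\frac{L^2}{64\pi^4}$. (The printed version appears to be a misprint: for the convex curve with support function $1+\epsilon\cos2\theta$ one computes $I_{-1}=\frac32\epsilon^2+O(\epsilon^3)$ and $I_0=18\pi^2\epsilon^2+O(\epsilon^3)$, so $I_0=12\pi^2I_{-1}\not\geqq 64\pi^4I_{-1}$; the intended inequality is $8\pi^2I_{-1}\leqq I_0$, which your normalisation silently adopts. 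You should say so explicitly rather than changing the constant without comment.)

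For comparison: this paper gives no proof of the theorem---it is imported from \cite{NN}---and the argument there is Fourier-analytic in the spirit of Hurwitz \cite{H}, which avoids your nonlinearity problem entirely. Writing $f=f_1+if_2=\sum_k\hat f(k)\varphi_k$ in arc length, the constraint $|f'|\equiv1$ together with $A=\frac12\Im\int\bar ff'\,ds$, $\kappa^2=|f''|^2$, $(\kappa')^2+\kappa^4=|f'''|^2$ and $\int_0^L\kappa^3\,ds=\Im\int_0^L\overline{f''}f'''\,ds$ turns the three scale-invariant quantities into weighted sums of $|\hat f(k)|^2$: up to the common factor $L^{-3}$ one gets $4\pi^2\sum_kk(k-1)|\hat f(k)|^2$ for $I_{-1}$, $16\pi^4\sum_kk(k-1)\cdot k(k+1)|\hat f(k)|^2$ for $I_0$, and $64\pi^6\sum_kk(k-1)\cdot k^4|\hat f(k)|^2$ for the bracketed energy, all nonnegative sums against the weight $k(k-1)\geqq0$. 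The right-hand inequality is then the discrete Cauchy--Schwarz inequality for these sums (using $k^2(k+1)^2\leqq\frac94k^4$ for $k\notin\{0,1\}$), and equality forces $\hat f(k)=0$ for all $k\neq0,1$, i.e.\ $\tilde\kappa\equiv0$---this is the rigidity statement, obtained exactly, not by a proportionality argument fed back through Frenet. Your postulated commuting operators become genuine diagonal multipliers in this frame; if you want to salvage your approach, the Fourier side is where it can actually be made rigorous.
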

From this inequality, we have the new interpolation inequalities.
\begin{thm}
Let $ 0 \leqq \ell \leqq m $.
There exists a positive constant $ C = C ( \ell,m ) $ independent of $ L $ such that
\[
	I_\ell
	\leqq
	C \left( I_{-1}^{ \frac { m - \ell } 2 } I_m + I_{-1}^{ \frac { m - \ell } { m+1 } } I_m^{ \frac { \ell +1 } { m+1 } } \right)
\]
holds.
\label{Theorem3}
\end{thm}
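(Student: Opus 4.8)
The plan is to reduce the general statement to the single case $\ell=0$ and then to extract that case from Theorem~\ref{Theorem1}. For the reduction, suppose we have already established
\[
	I_0 \leqq C\left( I_{-1}^{m/2} I_m + I_{-1}^{m/(m+1)} I_m^{1/(m+1)} \right).
\]
Substituting this into the Gagliardo--Nirenberg inequality $I_\ell \leqq C I_m^{\ell/m} I_0^{1-\ell/m}$ and applying the elementary bound $(a+b)^\theta \leqq a^\theta + b^\theta$ for $0\leqq\theta\leqq 1$ with $\theta = 1-\ell/m$, a direct computation of the two resulting exponents returns precisely
\[
	I_\ell \leqq C\left( I_{-1}^{(m-\ell)/2} I_m + I_{-1}^{(m-\ell)/(m+1)} I_m^{(\ell+1)/(m+1)} \right),
\]
so it suffices to treat $\ell = 0$.

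For $\ell=0$ I would start from the right-hand inequality of Theorem~\ref{Theorem1}, which after squaring reads
\[
	I_0^2 \leqq (8\pi^2)^2\, I_{-1}\, J, \qquad
	J := L^3 \int_0^L \left\{ \kappa^3 \tilde\kappa + (\tilde\kappa^\prime)^2 \right\} ds .
\]
Writing $\kappa = \tilde\kappa + 2\pi/L$ and using $\int_0^L \tilde\kappa\, ds = 0$, the bracket decomposes into scale-invariant pieces,
\[
	J = L^3 \int_0^L \tilde\kappa^4\, ds + 6\pi L^2 \int_0^L \tilde\kappa^3\, ds + 12\pi^2 I_0 + I_1 .
\]
The two polynomial terms are then controlled by the scale-invariant Gagliardo--Nirenberg inequalities for mean-zero $L$-periodic functions,
\[
	L^{p-1}\int_0^L |\tilde\kappa|^p\, ds \leqq C\, I_m^{(p-2)/(4m)} I_0^{\,p/2 - (p-2)/(4m)} \qquad (p = 3,4),
\]
while $I_1 \leqq C I_m^{1/m} I_0^{1-1/m}$ is the given interpolation inequality.

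The next step is to collapse $J$ to only two monomials. The cubic term is exactly the geometric mean of the quartic term $I_m^{1/(2m)} I_0^{2-1/(2m)}$ and $I_0$, so AM--GM lets me discard it; and by Wirtinger's inequality applied successively to $\tilde\kappa, \tilde\kappa^\prime,\dots,\tilde\kappa^{(m-1)}$ one has $I_0 \leqq I_m$, whence $12\pi^2 I_0 \leqq 12\pi^2 I_m^{1/m} I_0^{1-1/m}$ is absorbed into the $I_1$-type term. This leaves
\[
	J \leqq C\left( I_m^{1/(2m)} I_0^{2-1/(2m)} + I_m^{1/m} I_0^{1-1/m} \right).
\]
Inserting this into $I_0^2 \leqq (8\pi^2)^2 I_{-1} J$ and applying Young's inequality to each product so as to absorb a small multiple of $I_0^2$ on the left, the two terms contribute $I_{-1}^{2m} I_m$ and $I_{-1}^{m/(m+1)} I_m^{1/(m+1)}$ respectively; finally $0\leqq I_{-1} < 1$ together with $2m \geqq m/2$ gives $I_{-1}^{2m}\leqq I_{-1}^{m/2}$, yielding the $\ell=0$ bound.

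The main obstacle is the third step: organizing the nonlinear remainder $J$ so that, after the Young absorption, no monomial with a spurious power of $I_{-1}$ or $I_m$ survives. The two observations that make this work --- recognizing the cubic term as the AM--GM interpolant of the quartic term and $I_0$, and dominating the stray $I_0$ by $I_m$ through Wirtinger's inequality --- are exactly what guarantee that only the two target terms remain, and the closing use of $I_{-1}<1$ is what reconciles the exponent $I_{-1}^{2m}$ with the sharper $I_{-1}^{m/2}$ in the statement.
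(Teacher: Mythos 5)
The paper itself does not reproduce a proof of this theorem (it defers to \cite{NN}, saying only ``From this inequality, we have the new interpolation inequalities''), so I can only check your argument on its own merits; it does follow exactly the route the paper signals, namely deducing the $\ell=0$ case from Theorem \ref{Theorem1} and then interpolating. I verified the computations and they are correct: the reduction via $I_\ell\leqq CI_m^{\ell/m}I_0^{1-\ell/m}$ and $(a+b)^\theta\leqq a^\theta+b^\theta$ returns precisely the stated exponents; the expansion of $\kappa^3\tilde\kappa$ (the $(2\pi/L)^3\tilde\kappa$ term integrating to zero) gives your four pieces; the cubic term $L^2\int|\tilde\kappa|^3\,ds\leqq CI_m^{1/(4m)}I_0^{3/2-1/(4m)}$ is indeed the geometric mean of the quartic bound and $I_0$; Wirtinger gives $I_0\leqq I_m$; and the two Young absorptions (conjugate exponents $4m$ and $2m/(m+1)$) produce, after the square root you take implicitly, $I_{-1}^{2m}I_m$ and $I_{-1}^{m/(m+1)}I_m^{1/(m+1)}$.

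The one genuine gap is the closing step $I_{-1}^{2m}\leqq I_{-1}^{m/2}$. This requires $I_{-1}\leqq 1$, i.e.\ $A\geqq 0$, which is \emph{not} among the hypotheses of the theorem as stated (the paper only asserts $I_{-1}\geqq 0$; a curve of rotation number $1$ can in principle have negative signed area, making $I_{-1}>1$). You should either add the hypothesis or, better, dispose of the complementary case directly: if $I_{-1}\geqq 1$ then $I_{-1}^{m/2}\geqq 1$ and Wirtinger already gives $I_0\leqq I_m\leqq I_{-1}^{m/2}I_m$, so the claimed bound is trivial there. With that one line added, your proof is complete. Two cosmetic points: state explicitly that you take square roots of $I_0^2\leqq C(\cdots)$ using $\sqrt{a+b}\leqq\sqrt a+\sqrt b$, and note that the cases $m=0$ and $\ell=m$ (where your exponents degenerate) are trivial.
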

\subsection{Applications to some geometric flows}
\par
In this subsection we give applications of our inequalities to the asymptotic analysis of geometric flows of closed plane curves.
One of the flows is a curvature flow {\rm \pref{JiangPan}} with a non-local term first studied by Jiang-Pan \cite{JP},
and another is the area-preserving curvature flow {\rm \pref{area-preserving}} considered by Gage \cite{G}.
If the initial curve is convex,
then the flows exist for all time, preserving the convexity,
and the curve approaches a round circle;
this was shown in \cite{JP,G}.
The local existence of flows without a convexity assumption was shown by \v{S}ev\v{c}ovi\v{c}-Yazaki \cite{SY}.
However,
the large-time behavior for this case is still open.
It seems 
finite-time blow-up may occur for some non-convex initial curves \cite{M},
but, on the other hand, the global existence for a certain initial non-convex curve was shown in \cite{SY}.
Escher-Simonett \cite{ES} showed the global existence and investigated the large-time behavior of the area-preserving curvature flow for initial data close to a circle and without a convexity assumption.
In this subsection,
we present the results for the large-time behavior of the flows without a convexity assumption {\it assuming} the global existence.
\par
We consider the flows {\rm \pref{area-preserving}} and {\rm \pref{JiangPan}} .
Observe that the equations which $ \vecf $ satisfies are
\begin{align*}
        &\partial_t \vecf = \partial_s^2 \vecf - \frac{2\pi}{L} R \partial_s \vecf,\\
	&\partial_t \vecf = \partial_s^2 \vecf - \frac{L}{2A} R \partial_s \vecf, 
\end{align*}
where
\[
	R = \left(
	\begin{array}{rr}
	0 & -1 \\
	1 & 0
	\end{array}
	\right) .
\]
Since these are parabolic equations with a non-local term,
$ \vecf $ is smooth for $ t > 0 $ as long as the solution exists.
Hence by shifting the initial time,
the initial data is smooth. Then we have the following theorem. 
\begin{thm}
Assume that $ \vecf $ is a global solution of {\rm \pref{area-preserving}} or {\rm \pref{JiangPan}} 
such that the initial rotation number is $ 1 $ and the initial {\rm (}signed{\rm )} area is positive.
Then for each $ \ell \in \mathbb{N} \cup \{ -1, 0 \} $,
there exist $ C_\ell > 0 $ and $ \lambda_\ell > 0 $ such that
\[
	I_\ell (t) \leqq C_\ell e^{ - \lambda_\ell t } .
\]
Furthermore there exist $A_\infty$ and $L_\infty$ such that 
$A(t)$ converges to $A_\infty$ and $L(t)$ converges to $L_\infty$ as $ t \to \infty$.
\label{Theorem4}
\end{thm}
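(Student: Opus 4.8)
The plan is to reduce the theorem to three ingredients: two-sided bounds on the geometric quantities $L$ and $A$, the exponential decay of $I_{-1}$, and uniform-in-time bounds $\sup_{t\geqq 0} I_m(t) < \infty$ for every $m$. Granting these, the case $\ell = -1$ is exactly the decay of $I_{-1}$, while for fixed $\ell\geqq 0$ one chooses any $m > \ell$ in Theorem \ref{Theorem3}:
\[
	I_\ell\leqq C\left( I_{-1}^{\frac{m-\ell}{2}} I_m + I_{-1}^{\frac{m-\ell}{m+1}} I_m^{\frac{\ell+1}{m+1}} \right).
\]
Since $I_m\leqq C_m$, $I_{-1}\leqq C_{-1}e^{-\lambda_{-1}t}$, and $\frac{m-\ell}{m+1}\leqq\frac{m-\ell}{2}$, this gives $I_\ell\leqq C_\ell e^{-\lambda_\ell t}$ with $\lambda_\ell = \frac{m-\ell}{m+1}\lambda_{-1}$. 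So the work lies entirely in the three ingredients.

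First I would record the evolution of the geometric quantities. Both flows are of the form $\partial_t\vecf = V\vecnu$ with $V = \tilde\kappa$ for \pref{area-preserving} and $V = \kappa - \frac{L}{2A}$ for \pref{JiangPan}, and the first-variation formulas read $\frac{dL}{dt} = -\int_0^L\kappa V\,ds$ and $\frac{dA}{dt} = -\int_0^L V\,ds$. A direct computation gives, for both flows simultaneously,
\[
	\frac{d}{dt}\left( L^2 - 4\pi A \right) = \frac{d}{dt}\left( L^2 I_{-1} \right) = -2I_0\leqq 0 .
\]
Hence $L^2 I_{-1}$ is non-increasing, $\int_0^\infty I_0\,dt\leqq\frac12\left( L(0)^2 - 4\pi A(0) \right) < \infty$, and the first inequality of Theorem \ref{Theorem1}, namely $I_0\geqq 64\pi^4 I_{-1}$, upgrades this to $\int_0^\infty I_{-1}\,dt < \infty$. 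Differentiating $I_{-1} = 1 - 4\pi A/L^2$ directly yields, for both flows,
\[
	\frac{d}{dt}I_{-1}\leqq -\frac{2I_0(1-I_{-1})}{L^2}\leqq 0 ,
\]
so $I_{-1}(t)\leqq I_{-1}(0) =: \delta_0$, with $\delta_0 < 1$ because $A(0) > 0$. For \pref{area-preserving}, $A\equiv A(0)$ and $\frac{dL}{dt} = -I_0/L\leqq 0$, so the isoperimetric inequality gives $\sqrt{4\pi A(0)}\leqq L(t)\leqq L(0)$. For \pref{JiangPan}, $\frac{dA}{dt} = \frac{L^2 I_{-1}}{2A}\geqq 0$ and $\frac{L^2}{A} = \frac{4\pi}{1-I_{-1}}\leqq\frac{4\pi}{1-\delta_0}$, so the integrability of $I_{-1}$ bounds $A$ from above, and then $L^2 = 4\pi A + L^2 I_{-1}$ is bounded above and below. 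In either case $A$ is monotone and bounded, hence $A(t)\to A_\infty$; since $L^2 I_{-1}$ is monotone and bounded and $A$ converges, $L^2 = 4\pi A + L^2 I_{-1}$ converges, so $L(t)\to L_\infty$. This already establishes the last assertion of the theorem.

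To upgrade the decay of $I_{-1}$ to an exponential rate, I insert $I_0\geqq 64\pi^4 I_{-1}$, $1 - I_{-1}\geqq 1 - \delta_0$, and the upper bound $L\leqq L_{\max}$ from the previous step into the differential inequality for $I_{-1}$, obtaining
\[
	\frac{d}{dt}I_{-1}\leqq -\frac{128\pi^4(1-\delta_0)}{L_{\max}^2}\,I_{-1} .
\]
Gronwall's lemma then gives $I_{-1}(t)\leqq\delta_0\,e^{-\lambda_{-1}t}$ with $\lambda_{-1} = 128\pi^4(1-\delta_0)/L_{\max}^2$.

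The main obstacle is the uniform bound $\sup_t I_m(t) < \infty$ for every $m$. For this I would use the evolution equations $\partial_t\kappa = \partial_s^2 V + \kappa^2 V$ and $\partial_t(ds) = -\kappa V\,ds$ to derive, for each $\ell\geqq 0$, an energy identity for $\int_0^L(\partial_s^\ell\tilde\kappa)^2\,ds$, paying special attention to the non-local factor multiplying $\vecnu$. After passing to the scale-invariant quantities this should take the schematic form
\[
	\frac{d}{dt}I_\ell + \frac{c_\ell}{L^2}\,I_{\ell+1}\leqq\frac{1}{L^2}\,P_\ell(I_0,\ldots,I_\ell) ,
\]
where $P_\ell$ collects lower-order products of the $I_j$. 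The Gagliardo-Nirenberg inequalities $I_\ell\leqq CI_m^{\ell/m}I_0^{1-\ell/m}$ recalled in the introduction let one absorb the top-order part of $P_\ell$ into the dissipative term $\frac{c_\ell}{L^2}I_{\ell+1}$, leaving a forcing that is a power of $I_0$. Since $L$ is bounded above and below and $\int_0^\infty I_0\,dt < \infty$ (indeed $I_0(t)\to 0$), a Gronwall-type argument run inductively in $\ell$, with base case $I_0$, then yields $\sup_t I_\ell < \infty$ for all $\ell$. Carrying out these energy estimates explicitly, and in particular controlling the non-local term uniformly in time, is the technical heart of the argument; once it is in place, the reduction in the first paragraph completes the proof.
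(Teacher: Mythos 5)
Your first two ingredients are sound and essentially match the strategy of the paper (whose own proof of this theorem is deferred to \cite{NN}, but is mirrored in Section 3 for the length-preserving flow): the identity $\frac{d}{dt}(L^2-4\pi A)=-2I_0$ for both flows, the resulting two-sided bounds and convergence of $A$ and $L$, and the exponential decay of $I_{-1}$ obtained by feeding the first inequality of Theorem \ref{Theorem1} into the differential inequality for $I_{-1}$ all check out. The genuine gap is in your third ingredient, the uniform bound on $I_0$ (the base case of your induction); the mechanism you propose for it would fail. When you write the energy identity for $I_0$ and expand $\kappa=\tilde\kappa+2\pi/L$, the right-hand side contains, besides superlinear terms, a term $\frac{c}{L^2}I_0$ with a \emph{fixed} constant $c$ of size $8\pi^2$ (cf.\ the analogous computation \pref{I_0}, where $12\pi^2 L\tilde\kappa^2$ on the right stands against $\frac{4\pi^2}{L^2}I_0$ on the left). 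Classical Gagliardo--Nirenberg cannot absorb this into the dissipation $\frac{2}{L^2}I_1$: Wirtinger gives $\frac{2}{L^2}I_1\geqq\frac{8\pi^2}{L^2}I_0$, which is exactly borderline, and after spending the dissipation on this linear term nothing remains to absorb the $\epsilon I_1$ produced by Young's inequality from the quartic and cubic terms. Moreover, a surviving linear forcing $\frac{c}{L^2}I_0$ is fatal to your Gronwall scheme: $\int_0^\infty I_0\,dt<\infty$ only controls forcings that are at least quadratic in $I_0$, and your parenthetical claim that $I_0(t)\to 0$ does not follow from integrability alone.

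This is precisely where the new inequalities must enter the energy estimate itself, not merely the final interpolation step to which you relegate them. The paper's argument (see the proof of Theorem \ref{Theorem11} for the length-preserving analogue) uses $I_0\leqq I_{-1}^{1/2}(I_1+I_1^{1/2})\leqq (I_{-1}^{1/2}+\epsilon)I_1+C_\epsilon I_{-1}$, from Theorems \ref{Theorem1} and \ref{Theorem3}, so that the already-established exponential decay of $I_{-1}$ makes the coefficient of $I_1$ eventually small enough to be absorbed, leaving an exponentially decaying forcing. Even then one arrives at an inequality of the form \pref{1} with a cubic term $\frac{C_2}{L^2}I_0^3$ on the right, and boundedness of $I_0$ requires the threshold/continuation argument: pick $T_2$ with $I_0(T_2)$ small (possible since $\int_0^\infty I_0\,dt<\infty$) and show $I_0$ never reaches $3/C_2$. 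Your ``Gronwall-type argument'' addresses neither point. Once $I_0$ is shown to decay, your induction for $\ell\geqq 1$ does close with classical tools, and your final reduction via Theorem \ref{Theorem3} is a clean, slightly different way to conclude; but as written the proposal is missing the key idea exactly at $\ell=0$.
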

\begin{thm}
Let $ \vecf $ be as in Theorem \ref{Theorem4},
and let $ \displaystyle{ f(s,t) = \sum_{ k \in \mathbb{Z} } \hat f(k) (t) \varphi_k (s) } $ be the Fourier expansion for any fixed $ t > 0 $.
Set
\[
	\vecc (t) = \frac 1 { \sqrt { L(t) } }( \Re \hat f(0) (t) , \Im \hat f(0) (t) ) ,
\]
and define $ r(t) \geqq 0 $ and $ \sigma (t) \in  \mathbb{R} / 2 \pi \mathbb{Z}  $ by
\[	
	\hat f(1) (t) = \sqrt{ L(t) } r(t) \exp \left( i \frac { 2 \pi \sigma (t) } { L(t) } \right) .
\]
Furthermore we set
\[
	\tilde { \vecf } ( \theta , t )
	=
	\vecf ( L(t) \theta - \sigma (t) , t ) ,
	\quad \mbox{for} \quad
	( \theta , t ) \in \mathbb{R} / \mathbb{Z} \times [ 0 , \infty )
	.
\]
Then the following claims hold.
\begin{itemize}
\item[{\rm (1)}]
There exists $ \vecc_\infty \in \mathbb{R}^2 $ such that
\[
	\| \vecc (t) - \vecc_\infty \| \leqq C e^{ - \gamma t } .
\]
\item[{\rm (2)}]
The function $ r(t) $ converges exponentially to the constant $ \displaystyle{ \frac { L_\infty } { 2 \pi } } $ as $ t \to \infty $:
\[
	\left| r(t) - \frac { L_\infty } { 2 \pi } \right|
	\leqq
	C e^{ - \gamma t } .
\]
\item[{\rm (3)}]
There exists $ \sigma_\infty \in \mathbb{R} / 2 \pi \mathbb{Z} $ such that
\[
	| \sigma (t) - \sigma_\infty | \leqq C e^{ - \gamma t } .
\]
\item[{\rm (4)}]
For any $ k \in \mathbb{N} \cup \{ 0 \} $ there exist $ C_k > 0 $ and $ \gamma_k > 0 $ such that 
\[
	\| \tilde { \vecf } ( \cdot , t ) - \tilde { \vecf }_\infty \|_{ C^k ( \mathbb{R} / \mathbb{Z} ) }
	\leqq
	C_k e^{ - \gamma_k t }
	,
\]
where
\[
	\tilde { \vecf }_\infty ( \theta )
	=
	\vecc_\infty + \frac { L_\infty } { 2 \pi } ( \cos 2 \pi \theta , \sin 2 \pi \theta ) .
\]
\item[{\rm (5)}]
For sufficiently large $ t $,
$ \mathrm{Im} \tilde { \vecf }( \cdot , t ) $ is the boundary of a bounded domain $ \Omega (t) $.
Furthermore, there exists $ T_\ast \geqq 0 $ such that $ \Omega (t) $ is strictly convex for $ t \geqq T_\ast $.
\item[{\rm (6)}]
Let $ D_{ r_\infty } ( \vecc_\infty ) $ be the closed disk with center $ \vecc_\infty $ and radius $ r_\infty $.
Then we have 
\[
	d_H ( \overline { \Omega(t) } , D_{ r_\infty } ( \vecc_\infty ) )
	\leqq
	C e^{ - \gamma t } ,
\]
where $ d_H $ is the Hausdorff distance.
\item[{\rm (7)}]
Let $\displaystyle 
	\vecb (t) =
	\frac 1 { A(t) } \iint_{ \Omega (t) } \vecx \, d \vecx$
be the barycenter of $\Omega(t)$. Then we have
\[
\|A(t) ( \vecb (t) - \vecc (t) )\| \leqq C e^{ - \gamma t }.
\]
\end{itemize}
\label{Theorem6}
\end{thm}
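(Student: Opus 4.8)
The plan is to convert the exponential decay of every $I_\ell$ supplied by Theorem \ref{Theorem4} into convergence of the curve itself, working in the complex framework $f = f_1 + i f_2$ with $\varphi_k(s) = L^{-1/2} e^{2\pi i k s/L}$. The first step is to upgrade the bare convergence of $A$ and $L$ in Theorem \ref{Theorem4} to \emph{exponential} convergence, since the constants $L_\infty, A_\infty$ enter every claim. For both flows one computes $dL/dt$ and $dA/dt$ and finds them to be combinations of $I_0/L$ and $I_{-1}$ with coefficients that stay bounded (because $L$ and $A$ are bounded above and below): for the area-preserving flow $A$ is conserved and $dL/dt = -\int_0^L \tilde\kappa^2\,ds = -I_0/L$, while for \pref{JiangPan} one gets $dA/dt = \pm \frac{L^2}{2A}\,I_{-1}$ and $dL/dt = -\frac{I_0}{L} + \frac{\pi L}{A}\,I_{-1}$. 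In every case $|dL/dt| + |dA/dt| \leqq C(I_0 + I_{-1}) \leqq C e^{-\lambda t}$, so integrating over $[t,\infty)$ yields $|L(t) - L_\infty| + |A(t) - A_\infty| \leqq C e^{-\lambda t}$.

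Next I would prove (2) by a direct Fourier estimate and then (1) and (3) by a drift argument. Writing the tangent angle as $\phi(s) = \phi(0) + 2\pi s/L + \psi(s)$ with $\psi(s) = \int_0^s \tilde\kappa(s')\,ds'$, one has $\|\psi\|_\infty \leqq \int_0^L |\tilde\kappa|\,ds \leqq I_0^{1/2}$; since $f' = e^{i\phi}$, the coefficient $\widehat{f'}(1) = L^{-1/2} e^{i\phi(0)} \int_0^L e^{i\psi}\,ds$ satisfies $|\widehat{f'}(1)| = L^{1/2}(1 + O(I_0))$, and as $\hat f(1) = \frac{L}{2\pi i}\,\widehat{f'}(1)$ this gives $r(t) = \frac{L}{2\pi}(1 + O(I_0))$, whence (2) follows together with the exponential convergence of $L$. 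For (1) and (3) the key point is that $\vecc(t) = \frac1L\int_0^L \vecf\,ds$ and the phase of $\hat f(1)$ are \emph{not} directly controlled by $I_\ell$, but their \emph{time derivatives} are: a circle is a stationary solution with $\tilde\kappa \equiv 0$, so differentiating $\vecc$ and $\arg \hat f(1)$ along the flow and using $\partial_t \vecf = \tilde\kappa\,\vecnu$ (resp. the analogue for \pref{JiangPan}) produces spatial integrals that vanish to first order in $\tilde\kappa$; typical bounds are $|d\vecc/dt| \leqq C I_0^{1/2}$ and $|d\sigma/dt| \leqq C(I_1 + I_2)^{1/2}$, the latter through $\partial_t \phi(0) = \pm\tilde\kappa'(0)$ and Sobolev control of $\tilde\kappa'$. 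Integrating these exponentially small rates over $[t,\infty)$ produces the limits $\vecc_\infty, \sigma_\infty$ and the estimates (1), (3).

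With (1)--(3) in hand I would establish the $C^k$ convergence (4). Theorem \ref{Theorem4} gives $I_\ell(t) \leqq C_\ell e^{-\lambda_\ell t}$ for all $\ell$, so by Sobolev embedding and the scale invariance of the $I_\ell$, every $C^k$ norm of $\tilde\kappa$, hence of $\psi$, decays exponentially. Since $\partial_\theta \tilde{\vecf}(\theta,t) = L\,e^{i(\phi(0) + 2\pi\theta - 2\pi\sigma/L + \psi)}$, the convergence of $L$, the definition of $\sigma$ (chosen precisely so that the phase $\phi(0) - 2\pi\sigma/L$ tends to the constant phase of $\partial_\theta \tilde{\vecf}_\infty$), and the exponential smallness of $\psi$ and its derivatives give exponential $C^k$ convergence of $\partial_\theta \tilde{\vecf}$ to $\partial_\theta \tilde{\vecf}_\infty$; integrating in $\theta$ and fixing the constant by (1) upgrades this to the full statement. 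The remaining claims are consequences: for (5), $\kappa = 2\pi/L + \tilde\kappa$ with $\|\tilde\kappa\|_\infty \to 0$ and $2\pi/L \to 2\pi/L_\infty > 0$ forces $\kappa > 0$ for large $t$, while $C^1$-closeness to a round circle forces embeddedness, so $\mathrm{Im}\,\tilde{\vecf}(\cdot,t)$ bounds a strictly convex domain; (6) is immediate from the $C^0$ part of (4) with $r_\infty = L_\infty/(2\pi)$; and (7) follows by writing $A\vecb = \iint_{\Omega} \vecx\,d\vecx$ as a boundary integral in $\tilde{\vecf}$ and comparing it with $A\vecc$, the difference being controlled by the now exponentially small deviation of $\tilde{\vecf}$ from the circle.

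The main obstacle is the drift control for $\vecc$ and $\sigma$ in the second step. Unlike the shape quantities $I_\ell$, the translation $\vecc$ and the reparametrization phase $\sigma$ are governed by the symmetries of the problem and could a priori drift even while the shape converges, so the real content is to show that each corresponding evolution integral genuinely vanishes to the order of $\tilde\kappa$ rather than merely staying bounded. This forces a careful accounting of the time-dependent arc-length element $\partial_t(ds) = -\kappa V\,ds$ and of the base-point dependence of $\hat f(1)$, which is where most of the work lies.
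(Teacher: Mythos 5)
The paper itself does not prove Theorem \ref{Theorem6}: it is imported verbatim from \cite{NN} (``for details of the proofs, see \cite{NN}''), so there is no in-paper argument to measure your write-up against. Judged on its own terms, your plan is coherent and is essentially the route the statement is engineered for --- $\vecc$, $r$, $\sigma$ are defined through $\hat f(0)$ and $\hat f(1)$ precisely so that a Fourier/tangent-angle argument applies --- and your preliminary identities are correct: $A$ is conserved and $dL/dt=-I_0/L$ for \pref{area-preserving}, while $dA/dt=\frac{L^2}{2A}I_{-1}$ and $dL/dt=-\frac{I_0}{L}+\frac{\pi L}{A}I_{-1}$ for \pref{JiangPan}, so $|L-L_\infty|+|A-A_\infty|\leqq Ce^{-\lambda t}$ indeed follows by integrating over $[t,\infty)$. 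Likewise $r=\frac{1}{2\pi}\bigl|\int_0^L e^{i\psi}\,ds\bigr|$ gives (2), with the small correction that your $O(I_0)$ in $\widehat{f'}(1)$ should be $O(I_0^{1/2})$ (harmless).

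Two places need more care than you give them. First, in the drift estimate for $\vecc$, the contribution of $\partial_t(ds)=-\kappa V\,ds$ is $-\frac1L\int_0^L \vecf\,\kappa V\,ds$, which contains $\vecf$ itself and is therefore not a priori bounded; you must either subtract $\vecc$ inside the integral (using $\|\vecf-\vecc\|_\infty\leqq L$) or close the resulting inequality $\|\vecc'\|\leqq Ce^{-\lambda t}(1+\|\vecc\|)$ by Gronwall before the asserted bound $\|d\vecc/dt\|\leqq CI_0^{1/2}$ is legitimate. Second, the rate for $\sigma$ should be extracted from $\frac{d}{dt}\arg\hat f(1)=\Im\bigl(\partial_t\hat f(1)/\hat f(1)\bigr)$ together with the lower bound $|\hat f(1)|\geqq cL^{3/2}$ furnished by your step for (2), rather than from the heuristic $\partial_t\phi(0)=\pm\tilde\kappa'(0)$: the arclength coordinate of the material base point itself moves, so $\phi(0)$ is not the right object. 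You flag this base-point issue yourself, and the computation does go through because $\partial_t s$, $\partial_t(ds)$ and $\partial_t\vecf$ are all exponentially small once Theorem \ref{Theorem4} is in hand. With those repairs, your deductions of (4)--(7) from (1)--(3), the decay of all $I_\ell$, and Sobolev embedding are standard and correct; note also that the phase consistency $\phi(0)-2\pi\sigma/L\to\pi/2$ needed in (4) is exactly what the definition of $\sigma$ via $\hat f(1)=\frac{L}{2\pi i}\widehat{f'}(1)$ provides.
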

\section{The length-preserving flow}
We consider the flow {\rm \pref{length-preserving}}.	
If the initial curve is convex,
then the flow exists for all time keeping the convexity,
and the curve approaches a round circle;
this was shown in \cite{MZ}.
We have the same results for the flows {\rm \pref{area-preserving}} and {\rm \pref{JiangPan}} without convexity assumption 
{\it assuming} the global existence. In this subsection, we give a proof of this fact. 
\par
First we prove the exponential decay of $I_{-1}$.
\begin{thm}
Assume that $ \vecf $ is a global solution of {\rm \pref{length-preserving}} such that 
the initial rotation number is $ 1 $ and the initial {\rm (}signed{\rm )} area is positive.
Then there exist $C > 0$ and $\lambda > 0$ such that
\[
I_{-1}(t) 
\leqq
C e^{-\lambda t}.
\]
\label{Theorem10}
\end{thm}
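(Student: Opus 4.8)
The plan is to reduce the whole statement to a single scalar differential inequality for $I_{-1}(t)$ and then close it by Gr\"onwall. The one external ingredient I need is the \emph{left} inequality in Theorem \ref{Theorem1}, namely $8\pi^2 I_{-1}\leqq I_0/(8\pi^2)$, i.e.\ $I_0\geqq 64\pi^4 I_{-1}$; everything else is first-variation bookkeeping for the flow \pref{length-preserving}. Throughout I use that $I_{-1}\geqq0$ by the isoperimetric inequality, so the target is genuine decay to $0$.

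First I would record the two standard first-variation formulas for a purely normal flow $\partial_t\vecf=V\vecnu$ with inward normal $\vecnu$, namely $\frac{dL}{dt}=-\int_0^L\kappa V\,ds$ and $\frac{dA}{dt}=-\int_0^L V\,ds$. Since $\veckappa=\kappa\vecnu$, we have $\|\veckappa\|^2=\kappa^2$ and the normal speed for \pref{length-preserving} is $V=\kappa-F$ with $F=\frac1{2\pi}\int_0^L\kappa^2\,ds$. Using $\int_0^L\kappa\,ds=2\pi$ (rotation number $1$), a direct computation gives $\frac{dL}{dt}=-\int_0^L\kappa^2\,ds+2\pi F=0$: this is exactly the length-preserving property, and it is what makes $L$ a genuine constant along the flow, so that $\lambda$ below is truly constant.

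Next I would compute $\frac{dA}{dt}$. Writing $\kappa=\tilde\kappa+2\pi/L$ and using $\int_0^L\tilde\kappa\,ds=0$ gives $\int_0^L\kappa^2\,ds=\int_0^L\tilde\kappa^2\,ds+4\pi^2/L$, hence $FL=\frac{L}{2\pi}\int_0^L\tilde\kappa^2\,ds+2\pi$, and therefore
\[
	\frac{dA}{dt}=-2\pi+FL=\frac{L}{2\pi}\int_0^L\tilde\kappa^2\,ds=\frac{I_0}{2\pi},
\]
since $I_0=L\int_0^L\tilde\kappa^2\,ds$. Because $L$ is constant, differentiating $I_{-1}=1-4\pi A/L^2$ yields the clean identity $\frac{dI_{-1}}{dt}=-\frac{4\pi}{L^2}\frac{dA}{dt}=-\frac{2}{L^2}I_0$. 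The lower bound $I_0\geqq 64\pi^4 I_{-1}$ from Theorem \ref{Theorem1} then converts this into $\frac{dI_{-1}}{dt}\leqq-\frac{128\pi^4}{L^2}I_{-1}$, and Gr\"onwall's inequality gives $I_{-1}(t)\leqq I_{-1}(0)\,e^{-\lambda t}$ with $\lambda=128\pi^4/L^2$ and $C=I_{-1}(0)$.

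The only substantive work is the first-variation step: one must track the non-local term carefully to confirm that the length is \emph{exactly} preserved (so that the $t$-derivative of $I_{-1}$ reduces to that of $A$ alone) and that $\frac{dA}{dt}$ collapses to the scale-invariant quantity $I_0/(2\pi)$. I do not expect a deeper obstacle beyond this algebra, since positivity of the initial area together with the monotonicity $\frac{dA}{dt}=I_0/(2\pi)\geqq0$ keeps the flow in the regime where Theorem \ref{Theorem1} applies; the exponential decay of $I_{-1}$ is then immediate. I would remark that, in contrast with the area-preserving and Jiang--Pan cases of Theorem \ref{Theorem4}, here the constant $L$ removes any need to separately control the length, which is what makes the argument this direct.
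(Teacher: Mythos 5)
Your proposal is correct and follows essentially the same route as the paper: compute $\frac{dA}{dt}=\frac{I_0}{2\pi}$ from the first variation of area, use constancy of $L$ to get $\frac{d}{dt}I_{-1}=-\frac{2}{L^2}I_0$, and close with the left inequality of Theorem \ref{Theorem1} plus Gr\"onwall. The only differences are cosmetic: you make the verification that $L$ is exactly preserved explicit, and you keep the sharper constant $64\pi^4$ from Theorem \ref{Theorem1} (yielding $\lambda=128\pi^4/L^2$) where the paper settles for the weaker rate $16\pi^2/L^2$.
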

\begin{proof}
Since
\be
\frac{dA}{dt}
=
-\int_0^L \partial_t \vecf \cdot \vecnu ds
=
\frac{L}{2\pi} \int_0^L \tilde{\kappa}^2 ds
=
\frac{I_0}{2\pi}
\label{dA/dt},
\ee
from Theorem \ref{Theorem1}, we have
\begin{align*}
\frac{d}{dt} I_{-1}
=
-
\frac{4\pi}{L^2}\frac{dA}{dt}
=
-
\frac{2}{L^2} I_0
\leqq
-
\frac{16\pi^2}{L^2} I_{-1}.
\end{align*}
Therefore, we have the desired conclusion.
\qed
\end{proof}
Next we show the exponential decay of $I_\ell$ for $\ell \in \mathbb{N}$.
\begin{thm}
Let $ \vecf $ be as in Theorem \ref{Theorem10}.
For each $ \ell \in \mathbb{N} \cup \{ 0 \} $,
there exist $ C_\ell > 0 $ and $ \lambda_\ell > 0 $ such that
\[
	I_\ell (t) \leqq C_\ell e^{ - \lambda_\ell t } .
\]
\label{Theorem11}
\end{thm}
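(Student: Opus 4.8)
The plan is to reduce the theorem to a single uniform-boundedness statement and then let the interpolation inequality of Theorem \ref{Theorem3} do the work. First I would record that the flow \pref{length-preserving} is genuinely length-preserving: since $\veckappa = \kappa \vecnu$, the flow is purely normal with speed $V = \kappa - h$, where $h = \frac{1}{2\pi}\int_0^L \kappa^2\,ds$, and because the rotation number is $1$ we have $\int_0^L \kappa\,ds = 2\pi$, so $\frac{dL}{dt} = -\int_0^L \kappa(\kappa - h)\,ds = 0$. Hence $L$ is a fixed positive constant along the flow, and $I_\ell = L^{2\ell+1}\int_0^L |\tilde\kappa^{(\ell)}|^2\,ds$ is, up to the frozen factor $L^{2\ell+1}$, just the energy $\int_0^L |\tilde\kappa^{(\ell)}|^2\,ds$. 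I then claim it suffices to prove that $\sup_{t\geqq 0} I_m(t) < \infty$ for every $m \in \mathbb{N}\cup\{0\}$. Indeed, fixing $\ell$ and choosing any $m > \ell$, Theorem \ref{Theorem3} gives $I_\ell \leqq C\,(\sup_t I_m)\bigl(I_{-1}^{(m-\ell)/2} + I_{-1}^{(m-\ell)/(m+1)}\bigr)$, and since Theorem \ref{Theorem10} bounds $I_{-1}$ by $C e^{-\lambda t}$, this yields $I_\ell \leqq C_\ell e^{-\lambda_\ell t}$ with $\lambda_\ell = \frac{m-\ell}{m+1}\lambda$.

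The analytic core is the evolution of $\tilde\kappa$. Using $\partial_t(ds) = -\kappa(\kappa-h)\,ds$ and the commutator $[\partial_t,\partial_s] = \kappa(\kappa-h)\partial_s$, and noting that both $2\pi/L$ and $h$ are spatially constant, one finds $\partial_t \tilde\kappa = \tilde\kappa'' + \kappa^3 - h\kappa^2$. Differentiating $\int_0^L |\tilde\kappa^{(m)}|^2\,ds$ and integrating by parts (all boundary terms vanish by periodicity), the top-order contribution is the dissipation $-2\int_0^L |\tilde\kappa^{(m+1)}|^2\,ds$, which after multiplication by $L^{2m+1}$ equals $-2L^{-2}I_{m+1}$; the remaining terms form a finite sum of integrals of products of $\kappa$ and derivatives of $\tilde\kappa$ of order at most $m+1$. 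I would estimate these by the Gagliardo--Nirenberg and Young inequalities (in their scale-invariant form, exploiting $\int_0^L \tilde\kappa^{(m)}\,ds = 0$) so as to reach a master inequality $\frac{d}{dt} I_m \leqq -\frac{c}{L^2} I_{m+1} + \frac{C}{L^2} R_m$, where $c > 0$ and $R_m$ is a polynomial in $I_0,\dots,I_m$ whose every monomial is at least quadratic. Since $\tilde\kappa^{(m)}$ has zero mean, Wirtinger's inequality gives $I_{m+1} \geqq 4\pi^2 I_m$, so the dissipation also controls $I_m$ itself.

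Boundedness of all $I_m$ I would then establish by induction. For the base case $m=0$, the identity \pref{dA/dt} together with the isoperimetric bound $A \leqq L^2/(4\pi)$ gives $\int_0^\infty I_0\,dt = 2\pi(A_\infty - A(0)) < \infty$, whence $\liminf_{t\to\infty} I_0 = 0$; combined with the master inequality for $m=0$ (whose $R_0$ is superlinear) and $I_1 \geqq 4\pi^2 I_0$, a standard ``small implies monotone'' argument shows that once $I_0$ is small it can only decrease, so $\sup_t I_0 < \infty$. For the inductive step, assuming $I_0,\dots,I_{m-1}$ are bounded, I would integrate the master inequality for $I_{m-1}$ in time: after absorbing the single top-order factor $\varepsilon I_m$ into the left-hand dissipation, the remainder $R_{m-1}$ is controlled by the already-bounded energies and is integrable, so $\int_0^\infty I_m\,dt < \infty$ and $\liminf_{t\to\infty} I_m = 0$. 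Feeding this into the master inequality for $I_m$, again with $I_{m+1}\geqq 4\pi^2 I_m$ and $R_m$ superlinear, forces $I_m$ to stay bounded. With boundedness of every $I_m$ in hand, the reduction of the first paragraph completes the proof.

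I expect the genuine difficulty to lie in the derivation and bookkeeping of the master inequality: tracking the nonlocal factor $h$, the commutator, and the time-dependence of $ds$, and, above all, verifying after integration by parts that \emph{every} lower-order term can be absorbed into $cL^{-2}I_{m+1}$ with a remainder $R_m$ that is genuinely superlinear in the controlled energies (so that the ODE comparison closes). It is worth stressing that the final passage from boundedness to exponential decay relies on Theorem \ref{Theorem3} rather than on the bare Gagliardo--Nirenberg inequalities, since Theorem \ref{Theorem3} is the only estimate that injects the smallness of $I_{-1}$ supplied by Theorem \ref{Theorem10}.
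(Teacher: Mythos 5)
There is a genuine gap, and it sits exactly where the paper's new inequalities are indispensable: your ``master inequality'' $\frac{d}{dt}I_m \leqq -cL^{-2}I_{m+1} + CL^{-2}R_m$ with $c>0$ and $R_m$ \emph{superlinear} in $I_0,\dots,I_m$ is not attainable from Gagliardo--Nirenberg, Young and Wirtinger alone. Already for $m=0$, expanding $\kappa=\tilde\kappa+2\pi/L$ in $L\int_0^L\kappa^3\tilde\kappa\,ds$ and in the non-local term produces, besides the genuinely superlinear pieces, the \emph{linear} contribution $-\tfrac{2}{L^2}I_1+\tfrac{8\pi^2}{L^2}I_0$ (this is what identity \pref{I_0} records). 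Wirtinger gives $I_1\geqq 4\pi^2 I_0$ with equality on the first harmonic, so this linear part is $\leqq 0$ but with \emph{zero margin}: no strictly positive multiple of $I_1$ (or of $I_0$) survives on the dissipative side, and the leftover linear term cannot be hidden in a superlinear $R_0$. Consequently your base-case argument breaks down: even when $I_0$ is small, the right-hand side of your inequality is of the form $\delta L^{-2}I_0$ with $\delta>0$ (the Young-inequality losses are multiples of $I_1$, hence comparable to the borderline linear term), so ``small implies decreasing'' does not follow, and neither boundedness nor decay of $I_0$ is obtained. The paper's way out is precisely Theorem \ref{Theorem1}/\ref{Theorem3}: $I_0\leqq I_{-1}^{1/2}(I_1+I_1^{1/2})\leqq (I_{-1}^{1/2}+\epsilon)I_1+C_\epsilon I_{-1}$, so that the dangerous linear term is absorbed into $-\tfrac{2}{L^2}I_1$ once $I_{-1}$ is small (guaranteed for large $t$ by Theorem \ref{Theorem10}), at the price of an inhomogeneous term $C_\epsilon e^{-\lambda t}$ in the remainder --- which is not a superlinear polynomial in $I_0$. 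You do acknowledge that Theorem \ref{Theorem3} must ``inject the smallness of $I_{-1}$,'' but you defer its use to the final passage from boundedness to decay; in fact it is needed inside the derivation of the differential inequality itself, for every $\ell$ (the paper uses it again to absorb the $C_2L^{-2}I_\ell$ term into $C_1L^{-2}I_{\ell+1}$ in the inductive step).

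The rest of your architecture is sound and is a legitimate reorganization of the paper's proof: $L$ is indeed constant, $\int_0^\infty I_0\,dt<\infty$ follows from \pref{dA/dt} and the isoperimetric inequality, and once uniform bounds on all $I_m$ are known, Theorem \ref{Theorem3} together with Theorem \ref{Theorem10} converts them into exponential decay of every $I_\ell$. (The paper instead obtains decay of each $I_\ell$ directly from a differential inequality whose right-hand side involves only the already-decaying $I_0$ and $I_{-1}$, using a continuity/contradiction argument to dispose of the cubic term $C_2L^{-2}I_0^3$ at the level $\ell=0$.) To repair your write-up, insert Theorem \ref{Theorem1}/\ref{Theorem3} and the decay of $I_{-1}$ into the derivation of the master inequality, accept the resulting $e^{-\lambda t}$ forcing term in $R_m$, and run the smallness argument on a time interval $[T_2,\infty)$ where both $I_0$ and $\int_{T_2}^\infty e^{-\lambda t}\,dt$ are small, as the paper does.
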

\begin{proof}
We initially consider the behavior of $I_0$. By direct calculation, we have
\begin{align*}
\frac{d}{dt} I_0
= & \
\frac{d}{dt} \left(L \int_0^L \kappa^2 ds\right)
\\
= & \
L \int_0^L \left( 2 \nabla_s^2 \veckappa + \| \veckappa \|_{ \mathbb{R}^2 }^2 \veckappa \right)
\cdot
\partial_t \vecf
\, ds
\\
= & \
L \int_0^L 2(\partial_s^2 \tilde{\kappa} + \kappa^3)
\left(\tilde{\kappa} - \frac{1}{2\pi} \int_0^L \tilde{\kappa}^2 ds\right) ds
\\
= & \
-
2L \int_0^L (\partial_s \tilde{\kappa})^2 ds
+
L \int_0^L \kappa^3 \tilde{\kappa} \, ds
-
\frac{1}{2\pi} \int_0^L \kappa^3 ds \int_0^L \tilde{\kappa}^2 ds
\\
= & \
-
\frac{2}{L^2}I_1
+
L \int_0^L 
\left(
\tilde{\kappa}^4 
+ \frac{6\pi}{L} \tilde{\kappa}^3 
+ \frac{12\pi^2}{L^2} \tilde{\kappa}^2
\right) ds
\\
& \quad
- \,
\frac{L}{2\pi} \int_0^L
\left(
\tilde{\kappa}^3
+ \frac{6\pi}{L} \tilde{\kappa}^2
+ \frac{8\pi^3}{L^3}
\right) ds
\int_0^L \tilde{\kappa}^2 ds.
\end{align*}
Hence we obtain
\begin{align}
 \frac{d}{dt} I_0
& + 
\frac{3}{L^2} I_0^2
+
\frac{4\pi^2}{L^2} I_0
+
\frac{2}{L^2}I_1
\label{I_0}\\
& =  
\frac{1}{L^2} \int_0^L 
(L^3 \tilde{\kappa}^4 
+ 6\pi L^2 \tilde{\kappa}^3 
+ 12\pi^2 L \tilde{\kappa}^2
) ds
-
\frac{1}{2\pi L^2}
\left(L^2 \int_0^L \tilde{\kappa}^3 ds \right)
\left(L \int_0^L \tilde{\kappa}^2 ds \right)
\nonumber
.
\end{align}
By the Gagliardo-Nirenberg inequalities, we have
\begin{align*}
\frac{1}{L^2} \int_0^L 
(L^3 \tilde{\kappa}^4 
+ 6\pi L^2 \tilde{\kappa}^3 
+ 12\pi^2 L \tilde{\kappa}^2
) ds
\leqq
	\frac C { L^2 }
	\left(
	I_1^{ \frac 12 } I_0^{ \frac 32 }
	+
	I_1^{ \frac 14 } I_0^{ \frac 54 }
	+
	I_0
	\right).
\end{align*}
Applying Young's inequalities,
Theorem \ref{Theorem1}
and Theorem \ref{Theorem3},
we have
\begin{align*}
	I_1^{ \frac 12 } I_0^{ \frac 32 }
	\leqq & \
	\epsilon I_1 + C_\epsilon I_0^3 ,
	\\
	I_1^{ \frac 14 } I_0^{ \frac 54 }
	\leqq & \
	\epsilon I_1 + C_\epsilon I_0^{ \frac 53 }
	\leqq
	\epsilon ( I_1 + I_0 ) + C_\epsilon I_0^3
	\leqq
	C \epsilon I_1 + C_\epsilon I_0^3
	,
	\\
	I_0
	\leqq & \
	I_{-1}^{ \frac 12 } \left( I_1 + I_1^{ \frac 12 } \right)
	\leqq
	\left( I_{-1}^{ \frac 12 } + \epsilon \right) I_1 + C_\epsilon I_{-1}
	\\
	\leqq & \
	\left( I_{-1}^{ \frac 12 } + \epsilon \right) I_1 + C_\epsilon e^{ - \lambda t }
\end{align*}
for any $ \epsilon > 0 $. Hence the first term on the right-hand side of \pref{I_0} is estimated above by 
\[ 
\left( I_{-1}^{ \frac 12 } + \epsilon \right) I_1 
+ 
\frac{C_\epsilon}{L^2} (I_0^3 + e^{ - \lambda t }).
\]
Moreover we have, by Young's inequality,
\begin{align*}
-
\frac{1}{2\pi L^2}
\left(L^2 \int_0^L \tilde{\kappa}^3 ds \right)
\left(L \int_0^L \tilde{\kappa}^2 ds \right)
& \leqq 
\frac{C}{L^2} I_1^\frac{1}{4} I_0^\frac{5}{4} I_0
=
\frac{C}{L^2} I_1^\frac{1}{4} I_0^\frac{9}{4}
\\
& \leqq 
\frac{\epsilon}{L^2} I_1 + \frac{C_\epsilon}{L^2} I_0^3.
\end{align*}
Taking $\epsilon$ sufficiently small, by Theorem \ref{Theorem10}, we have
\be
\frac{d}{dt} I_0
+
\frac{3}{L^2} I_0^2
+
\frac{4\pi^2}{L^2} I_0
+
\frac{C_1}{L^2} I_1
\leqq
\frac{C_2}{L^2} I_0^3
+
\frac{C_3}{L^2}e^{ - \lambda t }
\label{1}
.
\ee
It is obvious that there exists $ T_1 > 0 $ satisfying
\[
	\int_{ T_1 }^\infty \frac { C_3 } { L^2 } e^{ - \lambda t } dt < \frac 3 { 2 C_2 } .
\]
Furthermore, there exists $ T_2 \geqq T_1 $ such that $ \displaystyle{ I_0 ( T_2 ) < \frac 3 { 2 C_2 } } $,
because we have
\[
	\int_0^\infty I_0 dt \leqq C
\]
by integrating \pref{dA/dt}.
We would like to show $ \displaystyle{ I_0 ( t ) < \frac 3 { C_2 } } $ for $ t \geqq T_2 $.
To do this,
we argue by contradiction.
Then there exists $ T_3 > T_2 $ such that
\[
	I_0 ( t ) < \frac 3 { C_2 } \mbox{ for $ t \in [ T_2 , T_3 ) $,
	and } I_0 ( T_3 ) = \frac 3 { C_2 } .
\]
It follows from \pref{1} that
\[
	\frac d { dt } { I_0 }
	\leqq
	\frac { C_3 } { L^2 } e^{ - \lambda t }
\]
for $ t \in [ T_2 , T_3 ] $.
Hence
\[
	I_0 ( T_3 )
	=
	I_0 ( T_2 )
	+
	\int_{ T_2 }^{ T_3 } \frac d { dt } I_0 dt
	<
	\frac 3 { 2 C_2 }
	+
	\int_{ T_1 }^\infty \frac { C_3 } { L^2 } e^{ - \lambda t } dt
	<
	\frac 3 { C_2 } .
\]
This contradicts $ \displaystyle{ I_0 ( T_3 ) = \frac 3 { C_2 } } $.
Hence we show
\[
I_0 ( t ) < \frac 3 { C_2 } 
\]
for $ t \geqq T_2 $. Therefore, 
from \pref{1}, we obtain
\[
	\frac d { dt } I_0
	+
	\frac {C_1} { L^2 } I_1
	\leqq
	\frac{C_3}{L^2}e^{ - \lambda t }.
\]
By Wirtinger's inequalities, we have
\[
	\frac d { dt } I_0
	+
	\frac {C_4} { L^2 } I_0
	\leqq
	\frac{C_3}{L^2}e^{ - \lambda t }.
\]
Thus the assertion for $ \ell = 0 $ with some positive $ \lambda_0 $ has been proved.
\par
Next we consider the behavior of $I_\ell$ for $\ell \in \mathbb{N}$. 
Set
\[
	J_{k,p}
	=
	\left\{ L^{ ( 1 + k ) p - 1 } \int_0^L | \partial_s^k \tilde \kappa |^p ds \right\}^{ \frac 1p }
	.
\]
By the Gagliardo-Nirenberg inequalities we have
\be
	J_{ k,p } \leqq C J_{m,2}^\theta J_{0,2}^{ 1 - \theta }
	=
	C I_m^{ \frac \theta 2 } I_0^{ \frac { 1 - \theta } 2 }
	\label{J}
\ee
for $ k \in \{ 0 , 1 , \ldots , m \} $,
$ p \geqq 2 $.
Here $ C $ is independent of $ L $,
and $ \theta =  \frac 1m  \left( k - \frac 1p + \frac 12 \right)  \in [ 0,1 ] $.
For $ k \in \mathbb{N} \cup \{ 0 \} $ and $ m \in \mathbb{N} $,
let $ P_m^k ( \tilde \kappa ) $ be any linear combination of the type
\[
	P_m^k ( \tilde \kappa )
	=
	\sum_{ i_1 + \cdots + i_m = k } c_{ i_1 , \dots , i_m }
	\partial_s^{ i_1 } \tilde \kappa \cdots \partial_s^{ i_m } \tilde \kappa
\]
with universal,
constant coefficients $ c_{ i_1 , \dots , i_m } $.
Similarly we define $ P_0^k $ as a universal constant.
We can show
\[
	\partial_t \partial_s^k \tilde \kappa
	=
	\partial_s^{ k+2 } \tilde \kappa
	+
	\sum_{ m=0 }^2 L^{ - ( 2 - m ) } P_{m+1}^k ( \tilde \kappa )
	+
	\sum_{ m=0 }^1 L^{ - ( 1 - m ) } P_{m+1}^k ( \tilde \kappa ) \int_0^L \tilde{\kappa}^2 ds
\]
by induction on $k$.
Hence we have
\begin{align*}
\frac{d}{dt} I_\ell
= & \
2L^{2\ell+1} \int_0^L \partial_s^\ell \tilde{\kappa} \partial_t \partial_s^\ell \tilde \kappa \, ds
+
L^{2\ell+1} \int_0^L (\partial_s^\ell \tilde{\kappa})^2 \partial_t(ds)
\\
= & \
-
2L^{2\ell+1} \int_0^L (\partial_s^{\ell+1} \tilde \kappa)^2 ds
+
2L^{2\ell+1} \int_0^L \partial_s^\ell \tilde \kappa \sum_{ m=0 }^2 L^{ - ( 2 - m ) } P_{m+1}^\ell ( \tilde \kappa ) ds
\\
& \quad
+ \,
2L^{2\ell+1} \int_0^L \partial_s^\ell \tilde \kappa \sum_{ m=0 }^1 L^{ - ( 1 - m ) } P_{m+1}^\ell ( \tilde \kappa ) ds
\int_0^L \tilde{\kappa}^2 ds
-
L^{2\ell+1} \int_0^L (\partial_s^\ell \tilde \kappa)^2 \tilde{\kappa} \kappa \, ds
\\
& \quad
+ \,
\frac{L^{2\ell+1}}{2\pi} \int_0^L (\partial_s^\ell \tilde \kappa)^2 \kappa \, ds
\int_0^L \tilde{\kappa}^2 ds
\\
= & \
-
2L^{2\ell+1} \int_0^L (\partial_s^{\ell+1} \tilde \kappa)^2 ds
+
2\sum_{ m=0 }^2 L^{2\ell+1} \int_0^L \partial_s^\ell \tilde \kappa  L^{ - ( 2 - m ) } P_{m+1}^\ell ( \tilde \kappa ) ds
\\
& \quad
+ \,
2\sum_{ m=0 }^1 L^{2\ell+1} \int_0^L \partial_s^\ell \tilde \kappa  L^{ - ( 2 - m ) } P_{m+1}^\ell ( \tilde \kappa ) ds
\int_0^L \tilde{\kappa}^2 ds.
\end{align*}
We define $\Phi_m$ to be
\[
\Phi_m
=
L^{2\ell+1} \int_0^L \partial_s^\ell \tilde \kappa  L^{ - ( 2 - m ) } P_{m+1}^\ell ( \tilde \kappa ) ds.
\]
When $ m = 0 $, we have
\[
\Phi_0
=
\frac{C}{L^2} I_\ell
\]
since $P_1^\ell(\tilde{\kappa}) = c\partial_s^\ell\tilde{\kappa}$.
When $ m = 1 $, we have
\[
\Phi_1
=
L^{2\ell+1} \int_0^L \partial_s^\ell \tilde \kappa  L^{-1} P_{2}^\ell ( \tilde \kappa ) ds.
\]
Also $ P_2^\ell ( \tilde \kappa ) $ is a linear combination of $ ( \partial_s^k \tilde \kappa ) ( \partial_s^{ \ell - k } \tilde \kappa ) $ with $ k = 0 $,
$ \cdots $,
$ \ell $.
By H\"{o}lder's inequality,
we have
\[
	\left| L^{ 2 \ell + 1 } \int_0^L
	( \partial_s^\ell \tilde \kappa ) L^{-1} P_2^\ell ( \tilde \kappa ) \, ds
	\right|
	\leqq
	\sum_{k=0}^\ell \frac C { L^2 } J_{ \ell , 3 } J_{ k,3 } J_{ \ell - k , 3 }
	,
\]
and \pref{J} yields
\[
	J_{ j , 3 } \leqq C I_{ \ell + 1 }^{ \frac { \theta (j,3) } 2 } I_0^{ \frac { 1 - \theta (j,3) } 2 }
	,
	\quad
	\theta ( j,3 ) = \frac { j + \frac 16 } { \ell + 1 } .
\]
Hence applying Young's inequality,
we obtain
\[
	\left| L^{ 2 \ell + 1 } \int_0^L
	( \partial_s^\ell \tilde \kappa ) L^{-1} P_2^\ell ( \tilde \kappa ) \, ds
	\right|
	\leqq
	\frac C { L^2 } I_{ \ell + 1 }^{ \frac { 2 \ell + \frac 12 } { 2 ( \ell + 1 ) } }
	I_0^{ \frac { \ell + \frac 52 } { 2 ( \ell + 1 ) } }
	\leqq
	\frac \epsilon { L^2 } I_{ \ell + 1 }
	+
	\frac { C_\epsilon } { L^2 } I_0^{ \frac { 2 \ell + 5 } 3 }
\]
for any $ \epsilon > 0$.
When $ m = 2 $, we obtain
\begin{align*}
\Phi_2
= & \
L^{2\ell+1} \int_0^L \partial_s^\ell \tilde \kappa P_{3}^\ell ( \tilde \kappa ) ds.
\end{align*}
We can estimate for the case $ m = 3 $ similarly.
Indeed,
since $ P_3^\ell ( \tilde \kappa ) $ is a linear combination of $ \partial_s^j \tilde \kappa $,
$ \partial_s^k \tilde \kappa $,
and $ \partial_s^{ \ell - j - k } \tilde \kappa $,
we have
\begin{align*}
	\left| L^{ 2 \ell + 1 } \int_0^L
	( \partial_s^\ell \tilde \kappa ) P_3^\ell ( \tilde \kappa ) \, ds
	\right|
	&\leqq
	\sum_{m=0}^\ell \sum_{\substack{j+k=m\\ j\geqq 0,k\geqq 0}} \frac C { L^2 } J_{ \ell , 4 } J_{ j,4 } J_{ k,4 } J_{ \ell - j - k , 4 }\\
	&\leqq
	\frac C { L^2 } I_{ \ell + 1 }^{ \frac { 2 \ell + 1 } { 2 ( \ell + 1 ) } } I_0^{ \frac { 2 \ell + 3 } { 2 ( \ell + 1 ) } }
	\leqq
	\frac \epsilon { L^2 } I_{ \ell + 1 }
	+
	\frac { C_\epsilon } { L^2 } I_0^{ 2 \ell + 3 }
\end{align*}
for any $ \epsilon > 0$.
Hence we obtain
\begin{align*}
\frac{d}{dt} I_\ell
+
\frac{C_1}{L^2} I_{\ell+1}
\leqq & \
\frac{C_2}{L^2} I_\ell
+
\frac{\epsilon}{L^2} I_{\ell+1}
+
\frac{C_\epsilon}{L^2} I_0^\frac{2\ell +5}{3}
+
\frac{C_\epsilon}{L^2} I_0^{2\ell+3}
+
\frac{1}{L^2} I_\ell I_0
\\
& \quad
+ \,
\frac{\epsilon}{L^2} I_{\ell+1} I_0 
+
\frac{C_\epsilon}{L^2} I_0^{2\ell+4}. 
\end{align*}
From Theorem \ref{Theorem3} and Young's inequality, we have
\[
\frac{C}{L^2} I_\ell
\leqq 
\frac{C}{L^2}
\left(
I_{-1}^\frac{1}{2} I_{\ell+1}
+
I_{-1}^\frac{1}{\ell+2} I_{\ell+1}^\frac{\ell+1}{\ell+2}
\right)
\leqq 
\frac{C}{L^2}
\left\{
( I_{-1}^\frac{1}{2} + \epsilon ) I_{\ell+1}
+
C_\epsilon I_{-1}
\right\}.
\]
Therefore we have
\[
\frac{d}{dt} I_\ell
+
\frac{C_1}{L^2} I_\ell
\leqq
\frac{C}{L^2}
\left(
I_0^\frac{2\ell +5}{3}
+
I_0^{2\ell+3}
+
I_0^{2\ell+4}
+
I_{-1}
\right).
\]
Since we have already shown that $I_{-1}$ and $I_0$ decay exponentially as $t \to \infty$, we get the desired conclusion for 
$\ell \in \mathbb{N}$.
\qed
\end{proof}
\par
We can prove the following theorem in a similar way to the proof of Theorem \ref{Theorem6}, where we use Theorem \ref{Theorem10} instead of Theorem \ref{Theorem4}.
\begin{thm}
The claims (1)--(7) in Theorem \ref{Theorem6} also hold for global solutions of the length-preserving flow.
\label{Theorem12}
\end{thm}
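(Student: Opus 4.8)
The plan is to follow the proof of Theorem \ref{Theorem6} almost verbatim, replacing every appeal to the exponential decay of $I_\ell$ for the flows \pref{area-preserving} and \pref{JiangPan} (which there came from Theorem \ref{Theorem4}) by the corresponding decay for \pref{length-preserving} furnished by Theorem \ref{Theorem10} and Theorem \ref{Theorem11}. Thus the first task is to record the two geometric scalars that enter the statement. Since \pref{length-preserving} is length-preserving, a direct computation of $\frac{dL}{dt} = -\int_0^L \kappa V\,ds$ with the normal velocity $V = \tilde\kappa - \frac{1}{2\pi L} I_0$ gives $\frac{dL}{dt} = 0$, so $L(t) \equiv L$ is constant and $L_\infty = L$. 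By \pref{dA/dt} the area is nondecreasing, and it is bounded above by the isoperimetric inequality; hence $A(t)$ increases to some $A_\infty$, and since Theorem \ref{Theorem10} gives $I_{-1}(t) = 1 - 4\pi A(t)/L^2 \to 0$, we identify $A_\infty = L^2/(4\pi)$ and the limiting radius $r_\infty = L_\infty/(2\pi) = L/(2\pi)$.

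The second, and central, step is the Fourier-analytic part. As in the proof of Theorem \ref{Theorem6}, I would first note that $\partial_t \vecf = V\vecnu$ with $V = \tilde\kappa - \frac{1}{2\pi L} I_0$ is exponentially small together with all of its arclength derivatives, because Theorem \ref{Theorem11} yields $\|\tilde\kappa(\cdot,t)\|_{H^k} \to 0$ exponentially for every $k$ (and $I_0 \to 0$ is the case $\ell = 0$). Differentiating $\hat f(0)(t)$ in time and using $\int_0^L \partial_t(ds) = \frac{dL}{dt} = 0$, the motion of the normalized center $\vecc(t)$ is governed by $\int_0^L (\vecf - \vecc)\,\partial_t(ds)$ and $\int_0^L \partial_t \vecf\,ds$, both exponentially small; hence $\vecc(t)$ is Cauchy and converges exponentially, giving claim (1). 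The same scheme applied to $\hat f(1)(t)$, whose modulus and argument are $\sqrt{L}\,r(t)$ and $2\pi\sigma(t)/L$, shows that after subtracting the motion a round circle would undergo (which is none, since a circle is stationary for \pref{length-preserving}) its time derivative is exponentially small, so $r(t) \to L/(2\pi)$ and $\sigma(t) \to \sigma_\infty$ exponentially, which are claims (2) and (3). Feeding the convergence of $\vecc(t)$, $r(t)$, $\sigma(t)$ back into the reconstruction of the curve from its curvature, together with the exponential $H^k$-decay of $\tilde\kappa$ for all $k$ and Sobolev embedding, yields the $C^k$ convergence of the rescaled curve $\tilde{\vecf}(\cdot,t)$ to $\tilde{\vecf}_\infty$, which is claim (4).

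The remaining claims are geometric consequences. Claim (5) follows because $\|\tilde\kappa(\cdot,t)\|_{C^0} \leqq C\|\tilde\kappa\|_{H^1} \to 0$ exponentially, so $\kappa = \frac{2\pi}{L} + \tilde\kappa > 0$ for $t$ beyond some $T_\ast$; combined with the $C^1$ convergence to an embedded circle in claim (4), $\mathrm{Im}\,\tilde{\vecf}(\cdot,t)$ is a simple closed strictly convex curve bounding $\Omega(t)$. Claim (6) is immediate from the $C^0$ part of claim (4), which controls the Hausdorff distance of the boundaries and hence of the closed regions to $D_{r_\infty}(\vecc_\infty)$. Claim (7) is obtained, exactly as in Theorem \ref{Theorem6}, by expressing $\vecb(t) - \vecc(t)$ through the higher Fourier modes of $\vecf$ and estimating them by the exponentially decaying $I_\ell$.

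I expect the only genuinely delicate points to be the disentangling of the true shape convergence (driven by $\tilde\kappa \to 0$) from the rigid motion and reparametrization in claims (1)--(4), in particular showing that the phase $\sigma(t)$ actually converges rather than drifting, and the refined barycenter estimate in claim (7). Since every such estimate parallels one already carried out for Theorem \ref{Theorem6} and the sole new input is the decay supplied by Theorems \ref{Theorem10} and \ref{Theorem11}, the adaptation is routine.
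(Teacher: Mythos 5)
Your proposal matches the paper's argument exactly: the paper's entire proof of this theorem is the single remark that one repeats the proof of Theorem \ref{Theorem6} with the decay estimates of Theorems \ref{Theorem10} and \ref{Theorem11} substituted for Theorem \ref{Theorem4}, which is precisely your plan. Your additional observations --- that $\frac{dL}{dt}=0$ so $L_\infty = L$, and that $A(t)$ increases to $A_\infty = L^2/(4\pi)$ by \pref{dA/dt} and the decay of $I_{-1}$ --- correctly supply the convergence of $L$ and $A$ that the cited replacement implicitly requires.
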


\par\noindent
{\bf Acknowledgment}.
The author expresses their appreciation to Professor Shigetoshi Yazaki and Professor Tetsuya Ishiwata for sharing information of related articles,
and for discussions.
The author also would like to express their gratitude to Professor Neal Bez for English language editing.

\end{document}